\documentclass[11pt]{amsart}
\usepackage{amssymb}
\usepackage{tikz}
\usetikzlibrary{calc}
\usetikzlibrary{decorations.markings}

\newtheorem{lemma}{Lemma}
\newtheorem{theorem}{Theorem}

\theoremstyle{definition}

\newtheorem{remark}{Remark}

\newcommand{\C}{\mathbb{C}}
\newcommand{\R}{\mathbb{R}}
\newcommand{\Z}{\mathbb{Z}}
\newcommand{\bH}{\mathbb{H}}
\newcommand{\tei}{\mathcal{T}}
\newcommand{\ttei}{\widetilde{\mathcal{T}}}
\newcommand{\oscc}{\mathcal{S}}

\begin{document}

\title{Penner coordinates for closed surfaces}
\author{Rinat Kashaev}
\address{University of Geneva\\
2-4 rue du Li\`evre, Case postale 64\\
 1211 Gen\`eve 4, Suisse}
\email{rinat.kashaev@unige.ch}

\thanks{Supported in part by Swiss National Science Foundation}

\begin{abstract}
Penner coordinates are extended to the Teichm\"uller spaces of oriented closed surfaces.
\end{abstract}

\date{March 2, 2014}
\maketitle

\section{Introduction}
Penner coordinates in decorated Teichm\"uller spaces of punctured surfaces \cite{MR919235,Penner2012} are distinguished by the following two remarkable properties: 
\begin{enumerate}
\item the mapping class group action is rational;
\item the Weil--Petersson symplectic form is given explicitly by a simple formula.
\end{enumerate}  
Due to these properties, quantum theory of Teichm\"uller spaces has been successfully developed in
\cite{MR1607296,MR1737362} which resulted in construction of a one-parameter family of unitary projective mapping class group representations in infinite dimensional Hilbert spaces. For the fundamental groups of punctured surfaces, generalizations of Penner coordinates were constructed for the moduli spaces of faithful $SL(2,\C)$-representations in \cite{MR2078900} and for the moduli spaces of irreducible but not necessarily faithful  $PSL(2,\R)$-representations in \cite{MR2122727}. In this paper, we extend Penner coordinates to the Teichm\"uller spaces of oriented closed surfaces of genus $g>1$.

Let $S$ be a closed oriented surface of genus $g>1$, and
let 
\begin{equation}
R_k\subset \operatorname{Hom}(\pi_1,PSL(2,\R)),\quad \pi_1\equiv\pi_1( S,x_0),
\end{equation} be the connected component of representations of Euler number $k\in \Z$ with $|k|\le2g-2$. According to the result of Goldman \cite{MR952283}, the component $R_{2-2g}$ corresponds to discrete faithful representations, so that one has a principal $PSL(2,\R)$-fibre bundle over the Teichm\"uller space $\tei\equiv\tei(S)$
\begin{equation}
p\colon R_{2-2g}\to\tei.
\end{equation}
Denoting by $\Omega$ the space of all horocycles in the hyperbolic plane $\bH^2$, we consider the associated fibre bundle 
\begin{equation}
\phi\colon \ttei\to\tei,\quad \ttei\equiv R_{2-2g}\times_{PSL(2,\R)}\Omega,
\end{equation}
as a substitute  for Penner's decorated Teichm\"uller space in the case of closed surfaces. We define the $\lambda$-distance
\begin{equation}
\lambda\colon\Omega\times\Omega\to\R_{\ge0}
\end{equation}
as follows. If $h,h'\in\Omega$ are based on distinct points of $\partial \bH^2$, then $\lambda(h,h')$ is the hyperbolic length of the horocyclic segment between tangent points of a horocycle tangent simultaneously  to both $h$ and $h'$, and we define $\lambda(h,h')=0$ if $h$ and $h'$ are based on one and the same point of $\partial \bH^2$.

To any $\alpha\in\pi_1\setminus\{1\}$, we associate a function
\begin{equation}
\lambda_\alpha\colon\ttei\to\R_{\ge0}, \quad [\rho,h]\mapsto \lambda(\rho(\alpha)h,h).
\end{equation}
It is easily checked that
\begin{equation}
\lambda_\alpha=\lambda_{\alpha^{-1}}.
\end{equation}
The set
$\lambda_\alpha^{-1}(0)$
is a sub-bundle of $\ttei$ with the fibers homemorphic  to $\R\sqcup\R$. Moreover, one has 
\begin{equation}\label{eq:empty}
\alpha\ne\beta\Rightarrow\lambda_\alpha^{-1}(0)\cap\lambda_\beta^{-1}(0)=\emptyset.
\end{equation}

For any subset $A\subset\pi_1\setminus\{1\}$, we associate the subset
\begin{equation}
\ttei_A\equiv \cap_{\alpha\in A}\lambda^{-1}_\alpha(\R_{>0})
\end{equation}
 together with a function
 \begin{equation}
J_A\colon \ttei_A\to \R_{>0}^A,\quad J_A(x)(\alpha)=\lambda_\alpha(x),\quad \forall x\in\ttei_A,\ \forall\alpha\in A.
\end{equation}
In what follows, for any cellular complex $X$, we will denote by $X_i$ the set of its $i$-dimensional cells.

We define a \emph{triangulation} of $(S,x_0)$ as a cellular decomposition with only one vertex at $x_0$ and where all 2-cells are triangles. We denote by $\Delta\equiv \Delta( S,x_0)$ the set of  all triangulations of $(S,x_0)$. In principle, the characteristic maps induce orientations on all edges of a triangulation, but we will ignore this part of the information from the cellular structure.

 For any $\tau\in\Delta$, to any edge $e\in\tau_1$ there correspond two mutually inverse elements $\gamma^{\pm1}\in\pi_1$. By abuse of notation, we identify $e$ with any of the functions $\lambda_{\gamma^{\pm1}}$:
 \begin{equation}
e\equiv\lambda_{\gamma^{\pm1}}\colon \ttei\to\R_{>0}.
\end{equation}

For any $\tau\in\Delta$ and $e\in\tau_1$, we denote by $\tau^e$ the triangulation obtained by the diagonal flip at $e$, with the flipped edge being denoted as $e_\tau$:
   \begin{equation}
   \tau\ni\quad
 \begin{tikzpicture}[scale=1.5,baseline=-3]
\filldraw[color=gray!10] (0:1cm)--(90:1 cm)--(180:1cm)--(-90:1cm)--cycle;
\draw[auto] (90:1 cm) to node {$e$} (-90:1cm) ;
\filldraw (0:1cm) circle (.5pt)--(90:1 cm) circle (.5pt)--(180:1 cm) circle (.5pt)--(-90:1cm) circle (.5pt)--(0:1cm);
 \end{tikzpicture}\quad\rightsquigarrow\quad
 \begin{tikzpicture}[scale=1.5,baseline=-3]
\filldraw[color=gray!10] (0:1cm)--(90:1 cm)--(180:1cm)--(-90:1cm)--cycle;
\draw[auto](180:1 cm) to node {$e_\tau$} (0:1cm) ;
\filldraw (0:1cm) circle (.5pt)--(90:1 cm) circle (.5pt)--(180:1 cm) circle (.5pt)--(-90:1cm) circle (.5pt)--(0:1cm);
 \end{tikzpicture}\quad
 \in \tau^e
\end{equation}
It is easily shown that for any $\tau\in\Delta$, one has a finite covering
\begin{equation}
\ttei=\ttei_{\tau_1}\cup(\cup_{e\in\tau_1}\ttei_{\tau^e_1}).
\end{equation}
Our first result gives a realization of $\ttei_{\tau_1}$ as an algebraic subset of co-dimension one in $\R_{>0}^{\tau_1}$. In more precise terms, the result follows.

To any pair $(\tau,t)$ with $\tau\in\Delta$ and $t\in\tau_2$, we associate a function
\begin{equation}
\psi_{\tau,t}\colon\R_{>0}^{\tau_1}\to\R,\quad f\mapsto \sum_{t'\in\tau_2}\epsilon_t(t')\frac{a^2+b^2+c^2}{abc},
\end{equation}
where $a,b,c$ are the values of $f$ on three sides of $t'$, while the function 
\begin{equation}\label{eq:sign}
\epsilon_t\colon\tau_2\to\{-1,1\}
\end{equation}
takes the value $-1$ on $t$ and the value $1$ on all other triangles. We remark that
\begin{equation}\label{eq:inters}
t\ne t'\Rightarrow\psi_{\tau,t}^{-1}(0)\cap\psi_{\tau,t'}^{-1}(0)=\emptyset.
\end{equation}
We also define
\begin{equation}
\psi_\tau\equiv\prod_{t\in\tau_2}\psi_{\tau,t}.
\end{equation}
\begin{theorem}\label{thm}
 For any $\tau\in\Delta$, the map $J_{\tau_1}\colon\ttei_{\tau_1}\to\R_{>0}^{\tau_1}$ is an embedding with the image $\psi_\tau^{-1}(0)=\sqcup_{t\in\tau_2}\psi_{\tau,t}^{-1}(0)$.
   \end{theorem}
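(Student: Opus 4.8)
The plan is to work in the vector model for horocycles: fixing the identification of a horocycle in $\bH^2$ with a vector in $\R^2$ modulo sign under which $\lambda$ becomes $|\det(\cdot,\cdot)|$, a point $x\in\ttei_{\tau_1}$ represented by $(\rho,h)$ produces vectors $v_\alpha$ ($\alpha\in\pi_1$, defined up to one global sign), with $v_\alpha$ attached to the vertex $\alpha\tilde x_0$ of the lift $\widetilde\tau$ of $\tau$ to $\bH^2$, so that $\lambda_e(x)=|\det(v_\alpha,v_\beta)|$ whenever the edge $e$ lifts to the segment joining $\alpha\tilde x_0$ and $\beta\tilde x_0$. Because $x\in\ttei_{\tau_1}$ all of these are positive, so for each $t\in\tau_2$ the base points on $\partial\bH^2$ of the three decorations of any lift of $t$ are pairwise distinct, and $t$ acquires a sign $s(t)\in\{\pm1\}$ recording whether their cyclic order agrees with the orientation $t$ inherits from $S$. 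A brief equivariance argument, using that $PSL(2,\R)$ preserves the cyclic order on $\partial\bH^2$, shows $s(t)$ is independent of the chosen lift, so $s\colon\tau_2\to\{\pm1\}$ is well defined.

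The inclusion $J_{\tau_1}(\ttei_{\tau_1})\subseteq\psi_\tau^{-1}(0)$ then follows from two facts. First, $\sum_{t\in\tau_2}s(t)$ equals twice the Euler number of $\rho$ (a standard computation via the orientation cocycle on $\partial\bH^2$, evaluated on the one-vertex triangulation $\tau$ through the equivariant vertex map $\alpha\tilde x_0\mapsto\rho(\alpha)(\text{base point of }h)$); since $\rho\in R_{2-2g}$ this value is $2(2-2g)$, and as $|\tau_2|=4g-2$ exactly one triangle $t^*$ has $s(t^*)$ opposite to the common sign of all the others. Second, the closing-up identity $\sum_{t\in\tau_2}s(t)\,q(t)=0$, where $q(t)=\frac{a^2+b^2+c^2}{abc}$ with $a,b,c$ the values of $f:=J_{\tau_1}(x)$ on the sides of $t$. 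Granting both, the identity becomes $\pm\bigl(q(t^*)-\sum_{t\ne t^*}q(t)\bigr)=0$, i.e.\ $\psi_{\tau,t^*}(f)=0$; by \eqref{eq:inters}, $t^*$ is the only triangle with that property.

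The closing-up identity is the computational core, and I expect it to be the main obstacle. Going once around the unique vertex $x_0$ of $\tau$ lifts to circling $\tilde x_0$ through the cyclically ordered triangles $\widetilde T_1,\dots,\widetilde T_M$ of $\widetilde\tau$ incident to it; normalising so that $h=\{\operatorname{Im}=c\}$ is based at $\infty$ of the upper half-plane, the decoration data of $\widetilde T_i$ develops to the decorated ideal triangle with vertices $\infty,L(i-1),L(i)$, where $L(j)\in\R$ is the base point of the decoration at the $j$-th neighbour of $\tilde x_0$. The part of $h$ inside this ideal triangle has signed horocyclic length $\bigl(L(i)-L(i-1)\bigr)/c$; its absolute value, by Penner's $h$-length formula, is the lambda-length of the side of $\widetilde T_i$ opposite $\tilde x_0$ divided by the product of the lambda-lengths of the two sides at $\tilde x_0$, and its sign is exactly $s(\widetilde T_i)$. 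Summing over $i$, the left-hand sides telescope to $\bigl(L(M)-L(0)\bigr)/c=0$, while the right-hand sides, regrouped according to the three corners that each $t\in\tau_2$ contributes around $x_0$, add up to $\sum_{t}s(t)q(t)$. So the only inputs here are Penner's $h$-length formula and the classical simplicial computation of the Euler number; getting the orientation bookkeeping right, and pinning down the factor $2$ in the latter, are the delicate points.

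The remaining assertions I would obtain by running this backwards. Given $f\in\psi_\tau^{-1}(0)$, \eqref{eq:inters} singles out the unique $t^*$ with $\psi_{\tau,t^*}(f)=0$; realise each $t\in\tau_2$ as a decorated ideal triangle with side lambda-lengths prescribed by $f$ — determined up to isometry by the three values — with $t^*$ oriented oppositely to all the others relative to $S$, and develop these triangles along $\widetilde\tau$. Each gluing across an edge is forced by the matching of lambda-lengths, so the only obstruction to $\pi_1$-equivariance is the holonomy around $x_0$; it fixes the base point and preserves the horocycle at $\tilde x_0$, hence lies in the one-parameter unipotent subgroup there, and its parameter is the total signed horocyclic length around $x_0$, which by the same computation equals $\pm\psi_{\tau,t^*}(f)=0$. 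This yields a representation $\rho$ and a horocycle $h$; the Euler-number count gives $\rho$ Euler number $2-2g$, so by Goldman's theorem \cite{MR952283} $\rho\in R_{2-2g}$, whence $[\rho,h]\in\ttei_{\tau_1}$ and $J_{\tau_1}([\rho,h])=f$. The same rigidity — the sign data and the triangles' positions being forced by $f$ — shows that any $x$ with $J_{\tau_1}(x)=f$ equals $[\rho,h]$, giving injectivity; and since the construction is manifestly continuous (indeed algebraic) in $f$, it provides a continuous inverse, so $J_{\tau_1}$ is an embedding.
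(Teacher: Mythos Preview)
Your argument is correct and reaches the same two key facts the paper isolates---exactly one triangle carries the ``wrong'' sign, and the signed horocyclic-length sum $\sum_t s(t)\frac{a^2+b^2+c^2}{abc}$ vanishes---but the route is genuinely different. The paper does not invoke the orientation cocycle or Penner's $h$-length formula at all; instead it works entirely in matrix form. It passes to the truncated complex $\bar\tau$, assigns the explicit elements $\pm w(e)$ and $\pm u(t\,a/bc)$ to long and short edges, and then computes the Euler number by lifting each $2$-cell relation to $\widetilde{SL}(2,\R)$ via the technical Lemmas~\ref{lem:1}--\ref{lem:3} (these are the paper's real work: they pin down, by explicit path homotopies, the integer contribution of each rectangular and hexagonal face). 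The vanishing boundary holonomy is then literally the triviality of a product of unipotents $u(\cdot)$, which is your telescoping identity in matrix dress. Your approach trades this self-contained but somewhat heavy lifting calculation for an appeal to the standard simplicial Euler-class formula; it is shorter and more geometric, at the cost of importing that result. The paper's approach, by contrast, makes the reconstruction map $\tilde\xi_\tau$ completely explicit in $PSL(2,\R)$ coordinates, which it then reuses to derive the signed Ptolemy relation and the formulas in Theorem~\ref{thm:2}.
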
 
   
\begin{remark}
 The transition functions $J_{\tau_1}\circ J_{\tau^e_1}^{-1}$ on the overlaps $\ttei_{\tau_1}\cap\ttei_{\tau_1^e}$ are given by  the signed Ptolemy transformation of \cite{MR2122727} (Proposition~4) with the sign function being given by \eqref{eq:sign}.  This is because  the inverse map $J_{\tau_1}^{-1}$ described in Section~\ref{sec2} is based on the same combinatorial rules as those of \cite{MR2122727}.
\end{remark}
   Let $\oscc\equiv\oscc(S)$   be the set of homotopy classes of essential simple closed curves in $S$, and $\Delta^\alpha\subset\Delta$ the set of triangulations of the form $\tau^\alpha$ with $\tau$ having an edge representing $\alpha$. 
From \eqref{eq:empty}, it is easily seen that
\begin{equation}
 \lambda_\alpha^{-1}(0)\subset\ttei_{\tau_1},\quad \forall \tau\in\Delta^\alpha.
\end{equation}
Our second result gives explicit coordinatization of the sub-bundles $\lambda_\alpha^{-1}(0)$ together with the explicit $\R_{>0}$-action along the fibers. The result follows.

For $\alpha\in\oscc$, 
let 
\begin{equation}
\ell_\alpha\colon\tei\to\R_{>0}
\end{equation}
be the hyperbolic length of the geodesic in the homotopy class of $\alpha$.
Any $\tau\in\Delta^\alpha$ has a distinguished edge $\alpha_\tau$. Let $\tau_\alpha$ be the quadrilateral having $\alpha_\tau$ as its diagonal. 
\begin{theorem}\label{thm:2} Let $\alpha\in\oscc$ and $\tau\in\Delta^\alpha$. Then
\begin{description}
\item[(i)] one has the inclusion $J_{\tau_1}(\lambda_\alpha^{-1}(0))\subset\cup_{t\in (\tau_\alpha)_2}\psi_{\tau,t}^{-1}(0)$;
\item[(ii)] for any $t\in(\tau_\alpha)_2$, the map 
\begin{multline}
L_{\alpha,\tau,t}\colon\ttei(\alpha,t)\equiv \lambda_\alpha^{-1}(0)\cap (\psi_{\tau,t}\circ J_{\tau_1})^{-1}(0)\to \R_{>0}\times\R^{\tau_1\setminus t_1}_{>0}\\
m\mapsto (\ell_\alpha(\phi(m)),J_{\tau_1\setminus t_1}(m))
\end{multline}
is a homeomorphism;
\item[(iii)] For any $d\in\R_{>0}$ one has the following equivalence
\begin{multline}\label{eq:equiv}
\phi(m)=\phi(m')
\Leftrightarrow \exists\  c\in\R_{>0}\colon J_{\tau_1\setminus t_1}(m)=c\,J_{\tau_1\setminus t_1}(m'),\\\forall m,m'\in(\ell_\alpha\circ\phi)^{-1}(d)\cap \ttei(\alpha,t).
\end{multline}
\end{description}
\end{theorem}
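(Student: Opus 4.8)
\emph{Strategy and the key lemma.}
The plan is to reduce the whole statement to one geometric lemma about the behaviour of the functions $\lambda_\gamma$ on $\lambda_\alpha^{-1}(0)$, after which only finite computations with the rational functions $\psi_{\tau,t}$ and two appeals to Theorem~\ref{thm} remain. I would fix $\tau\in\Delta^\alpha$, put $\sigma\equiv\tau^{\alpha_\tau}$ (so $\sigma$ has an edge $e$ representing $\alpha$, from which $\alpha_\tau$ arises by the flip), and label the sides of the quadrilateral $\tau_\alpha$ cyclically $a_1,a_2,a_3,a_4$ so that the two triangles of $\sigma$ inside $\tau_\alpha$ have edge sets $\{e,a_1,a_2\}$, $\{e,a_3,a_4\}$ and those of $\tau$ have edge sets $\{\alpha_\tau,a_1,a_4\}$, $\{\alpha_\tau,a_2,a_3\}$. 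Lifting to the universal cover and normalising so that $e$ joins the base horocycle $h$ to $\rho(\alpha)h$, one gets $\lambda_{a_1}=\lambda_g$, $\lambda_{a_2}=\lambda_{\alpha^{-1}g}$, $\lambda_{a_4}=\lambda_{g'}$, $\lambda_{a_3}=\lambda_{\alpha^{-1}g'}$ for suitable $g,g'\in\pi_1$ whose $\lambda$-values are positive on $\ttei_{\tau_1}$. The key lemma: the bundle $\lambda_\alpha^{-1}(0)\to\tei$ is trivial — its two $\R$-fibres being globally distinguished by the attracting/repelling dichotomy for the fixed points of $\rho(\alpha)$ on $\partial\bH^2$ — so $\lambda_\alpha^{-1}(0)$ has exactly two connected components (``sheets''), and on one sheet $\lambda_{\alpha^{-1}\gamma}=e^{\ell_\alpha/2}\lambda_\gamma$ for every $\gamma$ with $\lambda_\gamma\ne0$, on the other the same with $\alpha$ replacing $\alpha^{-1}$. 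This is elementary two--horocycle geometry: when $h$ is based at a fixed point $\omega$ of $\rho(\alpha)$, the horocycle $\rho(\alpha^{\pm1})h$ is again based at $\omega$ but crosses every geodesic issuing from $\omega$ a distance $\pm\ell_\alpha$ further along, scaling its $\lambda$-distance to any other horocycle by $e^{\pm\ell_\alpha/2}$. Specialising to $\gamma=g,g'$ then gives, on the first sheet, $\lambda_{a_1}=e^{-\ell_\alpha/2}\lambda_{a_2}$ and $\lambda_{a_4}=e^{-\ell_\alpha/2}\lambda_{a_3}$ — whence $\ell_\alpha\circ\phi=2\ln(\lambda_{a_2}/\lambda_{a_1})=2\ln(\lambda_{a_3}/\lambda_{a_4})$ — with the two ratios inverted on the second sheet; in either case $\lambda_{a_1}\lambda_{a_3}=\lambda_{a_2}\lambda_{a_4}$.

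\emph{Part (i).}
Since $\lambda_\alpha^{-1}(0)\subset\ttei_{\tau_1}$, Theorem~\ref{thm} embeds each sheet into one of the clopen pieces $\psi_{\tau,t}^{-1}(0)$ of $\psi_\tau^{-1}(0)=\sqcup_{t\in\tau_2}\psi_{\tau,t}^{-1}(0)$ (clopen by \eqref{eq:inters}). On the component $C_t:=\ttei_{\tau_1}\cap J_{\tau_1}^{-1}(\psi_{\tau,t}^{-1}(0))$ the subset $\{\lambda_\alpha>0\}$ equals $C_t\cap\ttei_{\sigma_1}$ (every edge of $\sigma$ but $e$ is an edge of $\tau$), and there the signed Ptolemy transformation of the Remark expresses $\lambda_\alpha$ as $(s_1(t)\,\lambda_{a_1}\lambda_{a_3}+s_2(t)\,\lambda_{a_2}\lambda_{a_4})/\lambda_{\alpha_\tau}$ with $s_i(t)\in\{\pm1\}$; as this open set is dense in $C_t$ (its complement, the codimension-one set $\ttei(\alpha,t)=\lambda_\alpha^{-1}(0)\cap C_t$), the formula extends to all of $C_t$. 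Since $\lambda_\alpha\geq0$, the numerator is never negative on $C_t$, and it vanishes somewhere iff $s_1(t)=-s_2(t)$; reading off the explicit signs of Proposition~4 of \cite{MR2122727} (via the Remark and \eqref{eq:sign}), one finds that $s_1(t)$ and $s_2(t)$ are opposite precisely when $\epsilon_t$ takes the value $-1$ on one of the two triangles of $\tau_\alpha$, i.e.\ precisely when $t\in(\tau_\alpha)_2$. Hence $\ttei(\alpha,t)=\emptyset$ for $t\notin(\tau_\alpha)_2$, which is (i). Finally, substituting the relations of the key lemma into $\psi_{\tau,v_2}\circ J_{\tau_1}$ on the first sheet (and symmetrically into $\psi_{\tau,v_1}\circ J_{\tau_1}$ on the second, where $v_1,v_2$ are the two triangles of $\tau_\alpha$) yields a strictly positive value — the two ``sum of squares'' terms combine to $(e^{\ell_\alpha}-1)\lambda_{\alpha_\tau}/(\lambda_{a_2}\lambda_{a_3})$, added to the positive contributions of the triangles outside $\tau_\alpha$ — so the first sheet is $\ttei(\alpha,v_1)$ and the second $\ttei(\alpha,v_2)$; in particular $\ttei(\alpha,t)$ is exactly one sheet for each $t\in(\tau_\alpha)_2$.

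\emph{Parts (ii) and (iii).}
Fix $t\in(\tau_\alpha)_2$, say with edge set $\{\alpha_\tau,a_1,a_4\}$, so that $a_2,a_3\in\tau_1\setminus t_1$ and, by (i), $\ttei(\alpha,t)$ is the sheet on which $\lambda_{a_1}=e^{-\ell_\alpha/2}\lambda_{a_2}$, $\lambda_{a_4}=e^{-\ell_\alpha/2}\lambda_{a_3}$, $\ell_\alpha\circ\phi=2\ln(\lambda_{a_2}/\lambda_{a_1})$. There the equation $\psi_{\tau,t}\circ J_{\tau_1}=0$, once these substitutions cancel its ``sum of squares'' terms, becomes the affine relation $K=\lambda_{\alpha_\tau}(e^{\ell_\alpha}-1)/(\lambda_{a_2}\lambda_{a_3})$ with unique positive root $\lambda_{\alpha_\tau}=K\lambda_{a_2}\lambda_{a_3}/(e^{\ell_\alpha}-1)$, where $K>0$ is the sum of the quantities $(a^2+b^2+c^2)/(abc)$ over the triangles $t''\notin(\tau_\alpha)_2$ (with $a,b,c$ the values of $J_{\tau_1}$ on the sides of $t''$) and is itself an explicit function of $\ell_\alpha\circ\phi$ and $J_{\tau_1\setminus t_1}$. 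Thus $\lambda_{a_1},\lambda_{a_4},\lambda_{\alpha_\tau}$, hence all of $J_{\tau_1}$, are recovered from $L_{\alpha,\tau,t}$ by an explicit continuous map, and Theorem~\ref{thm} then recovers the point of $\ttei(\alpha,t)$: this gives injectivity of $L_{\alpha,\tau,t}$ and continuity of its inverse. For surjectivity, given $(d,(x_e))\in\R_{>0}\times\R_{>0}^{\tau_1\setminus t_1}$ the same formulas produce $f\in\R_{>0}^{\tau_1}$ with $\psi_{\tau,t}(f)=0$ — hence $f\in\psi_\tau^{-1}(0)$ by \eqref{eq:inters} — and with $\lambda_{a_1}\lambda_{a_3}=\lambda_{a_2}\lambda_{a_4}$, so that $J_{\tau_1}^{-1}(f)\in\ttei(\alpha,t)$ (by the signed Ptolemy formula of (i), $\lambda_\alpha$ equals $\pm(\lambda_{a_1}\lambda_{a_3}-\lambda_{a_2}\lambda_{a_4})/\lambda_{\alpha_\tau}$ on $C_t$) and maps to $(d,(x_e))$; so $L_{\alpha,\tau,t}$ is a homeomorphism, which is (ii). For (iii), observe that, $d=\ell_\alpha\circ\phi$ being fixed, all the reconstruction formulas are homogeneous of degree one in the recorded data $J_{\tau_1\setminus t_1}$ ($K$ has degree $-1$, exactly cancelling the factor $\lambda_{a_2}\lambda_{a_3}$ in $\lambda_{\alpha_\tau}$); hence $J_{\tau_1\setminus t_1}(m)=c\,J_{\tau_1\setminus t_1}(m')$ together with $\ell_\alpha\circ\phi(m)=\ell_\alpha\circ\phi(m')$ forces $J_{\tau_1}(m)=c\,J_{\tau_1}(m')$, i.e.\ $m$ and $m'$ lie on one fibre of $\phi$ — the $\R_{>0}$-action rescaling the horocycle multiplies every $\lambda_\gamma$ by a common factor and preserves $\phi$ — and the converse is this same scaling statement read backwards.

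\emph{Main obstacle.}
The essential difficulty is the key lemma and, within part (i), matching the Ptolemy signs $s_i(t)$ against the sign function \eqref{eq:sign}; this is the one place the specific shape of \eqref{eq:sign} is used, and it is what forces the two sheets into the two triangles of $\tau_\alpha$. Once the ratios $\lambda_{a_1}/\lambda_{a_2}$ and $\lambda_{a_4}/\lambda_{a_3}$ are pinned to $e^{\mp\ell_\alpha/2}$, the rest is elementary manipulation of $\psi_{\tau,t}$ together with Theorem~\ref{thm}. Additional care is needed when sides of $\tau_\alpha$ are identified on $S$, but the key lemma is purely a statement about $\pi_1$ and is unaffected, so the computations survive with the labels $a_i$ read with multiplicity.
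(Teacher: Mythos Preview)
Your proof is correct and follows essentially the same architecture as the paper's: both use the signed Ptolemy relation to locate the negative triangle inside $\tau_\alpha$ for (i), both solve the equation $\psi_{\tau,t}=0$ for $\alpha_\tau$ after expressing the two sides of the negative triangle as $e^{-\ell_\alpha/2}$ times the opposite sides for (ii), and both reduce (iii) to a scaling statement.

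The genuine differences are in emphasis rather than substance. Your ``key lemma'' derives the ratio $\lambda_{a_1}/\lambda_{a_2}=e^{-\ell_\alpha/2}$ from intrinsic horocycle geometry (translating a horocycle along the axis of $\rho(\alpha)$ scales all its $\lambda$-distances by $e^{\pm\ell_\alpha/2}$); the paper instead computes the parallel transport of $\alpha$ via the rules~\eqref{pic:assign}, finds it upper triangular with diagonal $\pm x^{\pm1}$, and reads off $x=e^{-\ell_\alpha/2}$. Your route is more conceptual and explains at the same stroke why $\lambda_\alpha^{-1}(0)$ has two sheets indexed by attracting/repelling fixed points; the paper's route stays closer to the combinatorics of $\xi_\tau$ already developed in Section~\ref{sec2}. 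For part (i) the paper is terser: it simply notes that the Ptolemy sign rule $\alpha\beta=\gamma\delta$ forces $\prod_{t\in(\tau_\alpha)_2}\varepsilon_m(t)=-1$, whereas you unwind the same conclusion through the formula $\lambda_\alpha=(s_1\lambda_{a_1}\lambda_{a_3}+s_2\lambda_{a_2}\lambda_{a_4})/\lambda_{\alpha_\tau}$. For part (iii) the paper argues by choosing representatives $(\rho,h_0)$, $(\rho',h_0)$ and observing that $\rho,\rho'$ are conjugate by an upper-triangular element; your homogeneity argument is equivalent but makes the $\R_{>0}$-action along the fibre more explicit. Your closing remark about identified sides of $\tau_\alpha$ is also anticipated in the paper's footnote.
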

\begin{remark}
 The space  $\ttei(\alpha,t)$ in Theorem~\ref{thm:2} is a connected component of $\lambda_\alpha^{-1}(0)$.  It can also be singled out by fixing an orientation on $\alpha$, and considering only the classes $[\rho,h]$, with $h$  based on the attracting fixed point of $\rho(\alpha)$.
\end{remark}
The paper is organized as follows.  In Section~\ref{sec1} we collect necessary material on the group $PSL(2,\R)$ and we prove the important Lemmas~\ref{lem:1}--\ref{lem:3}. Sections~\ref{sec2} and \ref{sec3} contain proofs of  Theorems~\ref{thm} and \ref{thm:2} respectively.

\subsection*{Acknowledgements} This work is supported in part by Swiss National Science Foundation. Some of the results were reported at the Oberwolfach workshop ``New Trends in Teichm\"uller Theory and Mapping Class Groups" in February 2014. I would like to thank the participants of this workshop for useful and helpful discussions, especially J.~Andersen, M.~Burger, L.~Chekhov, V.~Fock,  L.~Funar, W.~Goldman, N.~Kawazumi, F.~Luo, G.~Masbaum, N.~Reshetikhin, R.~van der Veen, A.~Virelizier, A.~Wienhard. 
\section{Factorization in $SL(2)$}\label{sec1}
The \emph{matrix coefficients } of the group $SL(2,\R)$ are the mappings
\begin{equation}
a,b,c,d\colon SL(2,\R)\to \R
\end{equation}
such that
\begin{equation}
g=
\begin{pmatrix}
 a(g)&b(g)\\c(g)&d(g)
\end{pmatrix},\quad \forall g\in SL(2,\R).
\end{equation}
We fix two group embeddings 
\begin{equation}
u,v\colon \R\to SL(2,\R)
\end{equation}
defined by
\begin{equation}
u(x)=
\begin{pmatrix}
 1&x\\
 0&1
\end{pmatrix},\quad
v(x)=
\begin{pmatrix}
 1&0\\
 x&1
\end{pmatrix}.
\end{equation}
It is easily verified that an element $g\in SL(2,\R)$  with nonzero left lower coefficient, i.e. $c(g)\ne0$, is uniquely factorized  as follows:
\begin{equation}\label{eq:f1}
g=u(x)v(y)u(z)=
\begin{pmatrix}
 1+xy&x+z+xyz\\
 y&1+yz
\end{pmatrix}
\end{equation}
where
\begin{equation}\label{eq:f2}
 x=(a(g)-1)c(g)^{-1},\quad y=c(g),\quad z=c(g)^{-1}(d(g)-1).
\end{equation}
Moreover, for any $(x,y,z)\in\R^3_{>0}$, there exists a unique triple $(x',y',z')\in\R^3_{>0}$ such that
 \begin{equation}
v(x)u(y)v(z)=u(z')v(y')u(x').
\end{equation}
Explicitly, we have
\begin{equation}\label{eq:elec-net}
(x',y',z')=\left((x+xyz+z)^{-1}xy,x+xyz+z,yz(x+xyz+z)^{-1}\right)
\end{equation}
\begin{remark}
The map
\begin{equation}
R\colon \R^3_{>0}\to\R^3_{>0},\quad (x,y,z)\mapsto (x',y',z')
\end{equation}
is an involution which solves the set-theoretical tetrahedron equation
\begin{equation}
R_{123}\circ R_{145}\circ R_{246}\circ R_{356}=R_{356}\circ R_{246}\circ R_{145}\circ R_{123}.
\end{equation}
This solution is related with the star-triangle  transformation in electrical networks \cite{MR1421683,MR1694046}.
\end{remark}
\subsection{Universal covering $\widetilde{SL}(2,\R)$}
Let us define two coordinate charts covering the group manifold of $PSL(2,\R)$. We define two open contractible sets
\begin{equation}
U_1\equiv PSL(2,\R)\setminus |a|^{-1}(0),\quad U_2\equiv PSL(2,\R)\setminus |b|^{-1}(0)
\end{equation}
together with the homeomorphisms
\begin{equation}
\varphi_j\colon U_j\to \bH^3,\ j\in\{1,2\},\quad
\varphi_1=\left(\frac{b}{a},\frac{c}{a}, |a|\right),\ \varphi_2=\left(\frac{a}{b},\frac{d}{b}, |b|\right).
\end{equation}
The intersection $U_1\cap U_2$ consists of two contractible components
\begin{equation}
U_1\cap U_2=U_{12}^+\sqcup U_{12}^-,\quad U_{12}^\pm\equiv\{\pm ab>0\}.
\end{equation}
Let
\begin{equation}
p\colon\widetilde{SL}(2,\R)\to PSL(2,\R)
\end{equation}
be the canonical projection from the universal covering space. We fix a group isomorphism
\begin{equation}
\Phi\colon \Z\to p^{-1}(\pm1)\simeq\pi_1(PSL(2,\R),\pm1)
\end{equation}
sending an integer $n$ to the homotopy class of the loop 
\begin{equation}
\omega_n\colon [0,1]\ni t\mapsto
\pm\begin{pmatrix}
 \cos (n\pi t)&\sin(n\pi t)\\
 -\sin(n\pi t)&\cos(n\pi t)
\end{pmatrix}.
\end{equation}
We remark that for $n=1$ we have the following inclusions
\begin{multline}\label{eq:gamma0incls}
\omega_1\left([0,1]\setminus\{1/2\}\right)\subset U_1,\quad\omega_1\left(]0,1[\right)\subset U_2,\\
\omega_1\left(]0,1/2[\right)\subset U_{12}^+,\quad \omega_1\left(]1/2,1[\right)\subset U_{12}^-.
\end{multline}
For any $x\in\R$, we fix the lifts $\widetilde{u}(x)$, $\widetilde{v}(x)$ to $\widetilde{SL}(2,\R)$ represented by the paths
\begin{equation}\label{eq:lift-uv}
\widetilde{u}(x),\ \widetilde{v}(x)\colon [0,1]\to SL(2,\R),\quad
\widetilde{u}(x)(t)=u(xt),\
\widetilde{v}(x)(t)=v(xt).
\end{equation}
\begin{lemma}\label{lem:1}
For any $ x\in\R_{>0}$, let $\gamma_x$ be the lift of the left hand side of the $SL(2,\R)$-identity
\begin{equation}\label{eq:diamond}
(v(-x)u(2/x))^2=-1
\end{equation}
obtained by using the lifts~\eqref{eq:lift-uv}. Then the path homotopy class of $\gamma_x$  is given by the class $\Phi(1)$.
\end{lemma}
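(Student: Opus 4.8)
The plan is to detect the class of $\gamma_x$ through the action of $\widetilde{SL}(2,\R)=\widetilde{PSL}(2,\R)$ on the universal cover of the circle $\partial\bH^2$. Since $PSL(2,\R)$ acts faithfully on $\partial\bH^2$ and the deck subgroup $p^{-1}(\pm1)$ of $\widetilde{SL}(2,\R)$ over $PSL(2,\R)$ acts on the covering line $\R$ by nontrivial translations, the action of $\widetilde{SL}(2,\R)$ on $\R$ is faithful; and $\gamma_x$, being a lift of $-1\in SL(2,\R)$, acts on $\R$ by one of those deck translations. I identify $\partial\bH^2$ with $\R/2\pi\Z$ via the Cayley transform $z\mapsto(z-i)/(z+i)$ to the boundary circle of the disc. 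Under this identification, increasing real coordinate corresponds to increasing angle $\phi$, with $\infty\leftrightarrow\phi=0$ and $0\leftrightarrow\phi=\pi$, and the rotation $k(\theta)=\left(\begin{smallmatrix}\cos\theta&-\sin\theta\\\sin\theta&\cos\theta\end{smallmatrix}\right)$ acts by $\phi\mapsto\phi-2\theta$; since $\omega_1$ equals $k(-\pi t)$ in $PSL(2,\R)$, its lift based at the identity moves angles by $\phi\mapsto\phi+2\pi t$, so $\Phi(1)$ is the translation $\phi\mapsto\phi+2\pi$. It therefore suffices to follow one boundary point along $\gamma_x$ and verify that the total angular increment of its lift equals $+2\pi$.

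To do this I would write $\gamma_x$ as the concatenation of the four paths $s\mapsto v(-xs)$, $s\mapsto v(-x)u(2s/x)$, $s\mapsto v(-x)u(2/x)v(-xs)$, $s\mapsto v(-x)u(2/x)v(-x)u(2s/x)$, $s\in[0,1]$, whose consecutive endpoints agree and whose terminal endpoint is $(v(-x)u(2/x))^2=-1$ by \eqref{eq:diamond}, and follow the point $0\in\partial\bH^2$ (angle $\phi=\pi$) along each segment, computing the relevant linear fractional transformations explicitly. On the first and third segments $0$ is a fixed point of $v(-xs)$, so the lift stays constant; on the second segment the point runs along $\hat\R$ from $0$ through $\infty$ to $-2/x$, raising the lifted angle by $\pi+2\arctan(x/2)$; on the fourth it moves monotonically from $-2/x$ back to $0$ along the negative reals, raising the lifted angle by $\pi-2\arctan(x/2)$. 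The sum is $2\pi$, so $\gamma_x$ acts on $\R$ by $\phi\mapsto\phi+2\pi$, and by faithfulness $\gamma_x=\Phi(1)$. (As a check, the increment is independent of $x$, as it must be, being a discrete invariant and $\R_{>0}$ connected.)

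The entire difficulty lies in the orientation bookkeeping: fixing the identification $\partial\bH^2=\R/2\pi\Z$ and its orientation so that it is $\Phi(1)$, and not $\Phi(-1)$, that acts as $\phi\mapsto\phi+2\pi$, and keeping the lifted angle continuous at the instant the tracked point crosses $\infty$ on the second segment --- a sign slip in either place reverses the conclusion. Everything else is routine manipulation of M\"obius maps. One could instead use that $v(-x)u(2/x)$ has trace $0$ and is the time-one map of the elliptic one-parameter subgroup it generates, compare that subgroup's orientation with the rotation subgroup's, and square; but this route leaves an a priori ambiguity by an element of $\pi_1(SL(2,\R))$ between the exponential path and the broken path $\gamma_x$, which the argument above sidesteps.
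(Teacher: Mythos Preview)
Your argument is correct. Tracking the lifted angle of $0\in\partial\bH^2$ along the four concatenated segments does give a total increment of $+2\pi$, and your normalization of the Cayley transform and of the rotation subgroup is consistent, so the identification $\Phi(1)=(\phi\mapsto\phi+2\pi)$ is right. The faithfulness of the $\widetilde{SL}(2,\R)$-action on $\R$ then pins down $\gamma_x=\Phi(1)$.

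The paper proceeds quite differently. Rather than passing to the boundary action, it represents $\gamma_x$ by the \emph{pointwise} product path $t\mapsto(v(-xt)u(2t/x))^2$, computes this matrix explicitly, and then reads off which of the contractible charts $U_1,U_2,U_{12}^\pm$ (already introduced for $PSL(2,\R)$) the path visits and in what order; since the pattern matches that of $\omega_1$, the two paths are homotopic rel endpoints. This is shorter and entirely internal to the chart structure the paper has set up, with no orientation conventions to fix. Your approach via the translation number on the universal cover of $\partial\bH^2$ is more conceptual and would adapt without change to other products of $u$- and $v$-factors, but, as you note, it front-loads the delicate bookkeeping (Cayley orientation, continuity of the lift across $\infty$) that the paper's chart argument avoids. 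Both routes are equally valid; the paper's is tailored to the ambient setup, yours to a reader who thinks in terms of rotation numbers.
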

\begin{proof}
By using \eqref{eq:lift-uv}, we have
\begin{multline}
\gamma_{x}\equiv(\widetilde{v}(-x)\widetilde{u}(2/x))^2\colon [0,1]\to SL(2,\R),\quad
t\mapsto 
(v(-xt)u(2t/x))^2\\=
\begin{pmatrix}
1&2t/x\\
-tx&1-2t^2
\end{pmatrix}^2=\begin{pmatrix}
1-2t^2&4t(1-t^2)/x\\
-2t(1-t^2)x&1-6t^2+4t^4
\end{pmatrix}
\end{multline}
which has the properties
\begin{multline}\label{eq:gdin}
\gamma_{x}\left([0,1]\setminus\{1/\sqrt{2}\}\right)\subset U_1,\quad\gamma_{x}\left(]0,1[\right)\subset U_2,\\
\gamma_{x}\left(]0,1/\sqrt{2}[\right)\subset U_{12}^{+},\quad \gamma_{x}\left(]1/\sqrt{2},1[\right)\subset U_{12}^{-}.
\end{multline}
By comparing \eqref{eq:gdin} with \eqref{eq:gamma0incls}, we conclude that the path homotopy class of $\gamma_{x}$ coincides with that of $\omega_1$. 
\end{proof}
\begin{lemma}\label{lem:2}
 For any $x=(x_1,x_2,x_3)\in\R^3_{>0}$, let $x'\in\R_{>0}^3$ be the unique point such that
\begin{equation}\label{eq:tetr}
v(x_1)u(x_2)v(x_3)u(-x_3')v(-x_2')u(-x_1')=1.
\end{equation}
Let $\alpha_{x}$ be the lift of the left hand side of \eqref{eq:tetr} obtained by using the lifts~\eqref{eq:lift-uv}. Then the path homotopy class of $\alpha_{x}$ is given by the class $\Phi\left(0\right)$.
\end{lemma}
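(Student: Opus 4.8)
The plan is to follow the chart-tracking idea from the proof of Lemma~\ref{lem:1}, except that here I expect the whole loop $\alpha_x$ to remain inside one contractible chart, namely $U_1$, which by itself forces its class to be trivial.

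First I would spell out $\alpha_x$ as an explicit concatenation of six paths coming from the lifts \eqref{eq:lift-uv}. Writing $M=v(x_1)u(x_2)v(x_3)$ for the product of the first three factors in \eqref{eq:tetr}, the six segments, each parametrized by $t\in[0,1]$, are $t\mapsto v(tx_1)$, then $v(x_1)u(tx_2)$, then $v(x_1)u(x_2)v(tx_3)$, then $Mu(-tx_3')$, then $Mu(-x_3')v(-tx_2')$, and finally $Mu(-x_3')v(-x_2')u(-tx_1')$; the endpoint of the last segment at $t=1$ is the identity precisely by \eqref{eq:tetr}, so $\alpha_x$ is a genuine loop based at $1$.

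Next I would extract from \eqref{eq:tetr} the one algebraic fact needed to handle the second half of the loop. Rewriting \eqref{eq:tetr} as $Mu(-x_3')v(-x_2')=u(x_1')$, hence $Mu(-x_3')=u(x_1')v(x_2')$, and comparing the coefficient $a$ on both sides — on the left it equals $a(M)=1+x_2x_3$ because right multiplication by a matrix of the form $u(s)$ leaves the first column unchanged, and on the right it equals $a(u(x_1')v(x_2'))=1+x_1'x_2'$ — I obtain the identity $x_1'x_2'=x_2x_3$, and comparing the coefficient $b$ gives $b(Mu(-x_3'))=x_1'$.

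Then I would compute $a$ along each of the six segments: on segments $1$ and $2$ it is identically $1$; on segment $3$ it is $1+tx_2x_3$; on segment $4$ it is $a(M)=1+x_2x_3$; on segment $5$ it is $1+x_1'x_2'-t\,x_1'x_2'=1+(1-t)x_2x_3$ by the identity just obtained; and on segment $6$, which equals $u((1-t)x_1')$ once $Mu(-x_3')v(-x_2')=u(x_1')$ is used, it is again $1$. In every case $a\ge1>0$, so the image of $\alpha_x$ in $PSL(2,\R)$ never meets $|a|^{-1}(0)$ and therefore lies inside $U_1$. Since $U_1$ is contractible, in particular simply connected, the loop is null-homotopic, so the path homotopy class of $\alpha_x$ is $\Phi(0)$. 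The only step where anything could go wrong is segment $5$: a priori the coefficient there, $a(Mu(-x_3'))-t\,b(Mu(-x_3'))\,x_2'=1+x_2x_3-t\,b(Mu(-x_3'))\,x_2'$, might vanish for some $t\in(0,1)$, and its positivity hinges on recognizing $b(Mu(-x_3'))\,x_2'=x_1'x_2'=x_2x_3$ directly from \eqref{eq:tetr}; everything else is routine $2\times2$ arithmetic, so this identification is the crux.
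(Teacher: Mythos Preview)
Your proof is correct. The computations on all six segments check out; in particular the key identity $x_1'x_2'=x_2x_3$ that you extract from \eqref{eq:tetr} is exactly what makes segment~5 work, and can also be read off directly from \eqref{eq:elec-net}.

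Your route, however, is genuinely different from the paper's. The paper does not track charts at all: it simply observes that the one-parameter family $t\mapsto\alpha_{tx}$ for $t\in[0,1]$ is a continuous family of loops based at the identity (the relation \eqref{eq:tetr} holds for $tx$ with the corresponding $(tx)'\in\R_{>0}^3$ for every $t>0$, and degenerates to the constant loop at $t=0$), so $\alpha_x$ is homotopic to the constant loop. This is a one-line deformation argument. Your approach instead mirrors the chart-tracking strategy of Lemma~\ref{lem:1}, computing the matrix coefficient $a$ along each of the six segments and verifying $a\ge1$ throughout, so the loop never leaves the contractible chart $U_1$. What your approach buys is methodological uniformity with Lemma~\ref{lem:1} and an explicit picture of where the loop sits; what the paper's approach buys is brevity and the observation that the whole family of identities \eqref{eq:tetr} is contractible in the parameter $x$, which is conceptually cleaner and avoids any matrix arithmetic.
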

\begin{proof}
We just remark that the one parameter family of loops $\{f_t=\alpha_{tx}\}_{t\in[0,1]}$ is a well defined path homotopy between $\alpha_{x}$ and  the constant path.
\end{proof}
\begin{lemma}\label{lem:3} For any $x=(x_1,x_2,x_3)\in\R_{>0}^3$, let $\bar x\in\R^3$ be the unique point such that
\begin{equation}\label{eq:hex-e}
 v(x_1)u(-\bar x_3)v(x_2)u(-\bar x_1)v(x_3)u(-\bar x_2)=\epsilon\in\{-1,1\}.
\end{equation}
Let $\beta_{x,\epsilon}$ be the lift of the left hand side of \eqref{eq:hex-e} obtained by using the lifts~\eqref{eq:lift-uv}. Then the path homotopy class of $\beta_{x,\epsilon}$ is given by the class $\Phi\left(-(3+\epsilon)/2\right)$.
\end{lemma}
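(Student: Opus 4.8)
The approach I would take is to read the class of $\beta_{x,\epsilon}$ off as the winding number of its projection to $PSL(2,\R)$, bound that number crudely, and then pin it down by a parity argument that brings in the sign $\epsilon$; this route does not use Lemmas~\ref{lem:1}--\ref{lem:2}. Fix notation: regard $\beta_{x,\epsilon}$ concretely as the path $[0,1]\to SL(2,\R)$ running from $1$ to $\epsilon$ obtained by concatenating the (suitably left-translated) path-lifts \eqref{eq:lift-uv} of $v(x_1),\,u(-\bar x_3),\,v(x_2),\,u(-\bar x_1),\,v(x_3),\,u(-\bar x_2)$ in that order; composing with $SL(2,\R)\to PSL(2,\R)$ produces a loop at the identity, and by construction the class of $\beta_{x,\epsilon}$ is $\Phi(n)$ for the integer $n$ we must show equals $-(3+\epsilon)/2$, with $\Phi(1)=[\omega_1]$.

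First I would compute $n$ through the deformation retraction of $PSL(2,\R)$ onto $PSO(2)$. For $g\in SL(2,\R)$, the first column $(a(g),c(g))$ is a positive multiple of the first column of the $SO(2)$-factor in the Iwasawa decomposition $g=kan$ (the unipotent factor fixes $e_1$ and the diagonal factor scales it positively), so $\theta(g)\equiv\arg(a(g)+ic(g))$, well defined modulo $\pi$ on $PSL(2,\R)$, realizes that retraction; with the normalization forced by $\theta(\omega_1(t))=-\pi t$, $n$ equals $-1/\pi$ times the net increase of a continuous lift of $\theta$ along $\beta_{x,\epsilon}$. Now right multiplication by an upper unipotent matrix fixes the first column, so $\theta$ is constant along each of the three $\widetilde u$-factors; and along the factor $\widetilde v(x_i)$ the first column sweeps the straight segment from the current column $(a,c)$ to $(a,c)+x_i(b,d)$, where $(b,d)$ is the current second column. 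Since $ad-bc=1$ this segment misses the origin and has $(a,c)\times x_i(b,d)=x_i>0$, so $\theta$ strictly increases there by an amount $\delta_i\in(0,\pi)$ --- using that a segment not meeting the origin subtends an angle $<\pi$ at it. Hence the net increase of $\theta$ is $\delta_1+\delta_2+\delta_3\in(0,3\pi)$, and as the endpoints $1$ and $\epsilon$ both have $\theta\equiv0$, this net increase is a multiple of $\pi$; therefore $n\in\{-1,-2\}$.

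To decide between the two, I would use that $\beta_{x,\epsilon}$ lifts to an actual \emph{loop} in $SL(2,\R)$ precisely when $\epsilon=1$. If $\epsilon=1$, the class $\Phi(n)$ lies in the image of $\pi_1(SL(2,\R))\to\pi_1(PSL(2,\R))$, which is the index-two subgroup; under $\Phi$ this image is $2\Z$, so $n$ is even, hence $n=-2$. If $\epsilon=-1$, then $\beta_{x,\epsilon}$ runs from $1$ to $-1$, and concatenating it with the reverse of the $SL(2,\R)$-path $t\mapsto\begin{pmatrix}\cos\pi t & \sin\pi t\\ -\sin\pi t & \cos\pi t\end{pmatrix}$ (which runs from $1$ to $-1$ and projects to $\omega_1$) yields a loop in $SL(2,\R)$, whose class $\Phi(n)\Phi(1)^{-1}=\Phi(n-1)$ therefore lies in $2\Z$, so $n$ is odd, hence $n=-1$. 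In both cases $n=-(3+\epsilon)/2$, as claimed.

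The step I expect to be the main obstacle is the middle one: one has to be careful that $\theta$ genuinely computes the $\pi_1$-class with the correct sign and normalization --- i.e. that $g\mapsto(a(g),c(g))$ models the retraction onto $PSO(2)$ --- and that each $\widetilde v$-factor contributes a variation lying strictly in $(0,\pi)$; the strict positivity is immediate from $\det=1$, but the upper bound has to be extracted cleanly. Everything else (the index of $\pi_1(SL(2,\R))$ in $\pi_1(PSL(2,\R))$, and $[\omega_1]=\Phi(1)$) is standard and already set up. A second route, closer to the proof of Lemma~\ref{lem:1}, would first reduce by a homotopy within the solution set of \eqref{eq:hex-e} (at fixed $\epsilon$) to the symmetric case $x_1=x_2=x_3$ --- where \eqref{eq:hex-e} becomes $(v(x_1)u(-\bar x_1))^3=\epsilon$ with $x_1\bar x_1=2+\epsilon$ --- and then compare the $U_1,U_2,U_{12}^{\pm}$-itinerary of $\beta_{x,\epsilon}$ with that of $\omega_1$; but this needs a separate (properness) argument that the solution set is connected.
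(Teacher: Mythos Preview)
Your proof is correct and takes a genuinely different route from the paper's. The paper proceeds by algebraic substitutions: using the diamond identity of Lemma~\ref{lem:1} to replace factors $v(x_i)$ by $u(2/x_i)v(-x_i)u(2/x_i)$ (and analogously for $u$-factors), each such substitution shifting the homotopy class by $\Phi(\pm1)$, until the hexagon relation~\eqref{eq:hex-e} is transformed into an instance of the tetrahedron relation~\eqref{eq:tetr}, whose class is $\Phi(0)$ by Lemma~\ref{lem:2}. This requires a case analysis---for $\epsilon=-1$ the paper distinguishes three sub-cases according to whether the $x_i$ satisfy the triangle inequality strictly, degenerately, or not at all---and leans on both preceding lemmas. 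Your argument instead reads off $n$ directly as (minus) the winding of the first-column vector, observes that only the three $\widetilde v$-segments move that column, each along a straight segment with positive cross-product with its starting point, yielding a total rotation strictly in $(0,3\pi)$, and then finishes with a parity check. This is uniform in $x$ and self-contained. One small simplification: since $\beta_{x,\epsilon}$ is an honest path in $SL(2,\R)$, the angle $\theta$ is already well defined modulo $2\pi$ there, so $\delta_1+\delta_2+\delta_3$ must be congruent to $\theta(\epsilon)-\theta(1)\in\{0,\pi\}$ modulo $2\pi$; together with the bound $(0,3\pi)$ this pins down $n$ without invoking the index-two subgroup of $\pi_1$. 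The paper's approach, by contrast, stays within the $u$--$v$ factorization calculus that is reused in Section~\ref{sec2}, which is presumably why it was preferred.
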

\begin{proof}
Let $n\equiv\Phi^{-1}([\beta_{x,\epsilon}])$. We treat the different values of $\epsilon$ differently.

In the case $\epsilon=1$, let $i\in\{1,2,3\}$ be such that
\begin{equation}
x_i=\max(x_1,x_2,x_3).
\end{equation}
By making two substitutions 
\begin{equation}\label{eq:subs1}
v(x_i)\rightsquigarrow u(2/x_i)v(-x_i)u(2/x_i),\quad  u(-\bar x_i)\rightsquigarrow v(-2/\bar x_i)u(\bar x_i)v(-2/\bar x_i)
\end{equation}
 in the identity \eqref{eq:hex-e} and by using cyclic permutations of factors (which do not change the lift) and group homomorphism properties of the maps $u$ and $v$, we transform \eqref{eq:hex-e} into a case of identity~\eqref{eq:tetr}. By Lemma~\ref{lem:1}, each substitution in \eqref{eq:subs1} increases by one the integer associated to the homotopy class of its lift so that the class of the lift after these two substitutions is given by $\Phi(n+2)$. On the other hand, by Lemma~\ref{lem:2}, the same class is given by $\Phi(0)$. Thus,  we arrive at the conclusion that $n+2=0$.
 
 The case $\epsilon=-1$ is treated differently depening on existence or non-existence of Euclidean triangles with side lengths given by the components of $x$. 
 
 Assume first that there are no such triangles. It means that there exists an index $i\in\{1,2,3\}$ such that $x_i> x_j+x_k$, where $\{j,k\}=\{1,2,3\}\setminus\{i\}$. In that case, the substitution
 \begin{equation}\label{eq:subs2}
v(x_i)\rightsquigarrow u(2/x_i)v(-x_i)u(2/x_i)
\end{equation}
followed by cyclic permutations of factors and subsequent simplifications of products of $u$-terms transforms identity~\eqref{eq:hex-e} into a case of identity~\eqref{eq:tetr}. By a similar reasoning as above for the case with $\epsilon=1$, we conclude that $n+1=0$. 

Assume now that there exists an Euclidean triangle of non-zero area with side lengths given by the components of $x$. In that case, we choose arbitrary index $i\in\{1,2,3\}$ and make two substitutions 
 \begin{equation}\label{eq:subs3}
v(x_j)\rightsquigarrow u(2/x_j)v(-x_j)u(2/x_j),\quad v(x_k)\rightsquigarrow u(2/x_k)v(-x_k)u(2/x_k)
\end{equation}
with $\{j,k\}=\{1,2,3\}\setminus\{i\}$. Applying necessary cyclic permutations and simplifications of $u$-terms we arrive at an identity of the same form as \eqref{eq:hex-e} but with negated components $x_j$ and $x_k$ and with the accordingly modified point $\bar x$. In this new identity, the substitution
\begin{equation}\label{eq:subs4}
u(-\bar x_i)\rightsquigarrow v(-2/\bar x_i)u(\bar x_i)v(-2/\bar x_i),
\end{equation}
followed by cyclic permutations of factors and subsequent simplifications of products of $v$-terms,  brings it to a case of identity~\eqref{eq:tetr}. Under the fist two substitutions~\eqref{eq:subs3}, the integer $n$ is increased by two, while under the last substitution~\eqref{eq:subs4} it is reduced by one with the final value being $n+2-1=n+1$. Again, by Lemma~\ref{lem:2}, we conclude that $n+1=0$.

Finally, it remains the degenerate case where there exists $i\in\{1,2,3\}$ such that $x_i=x_j+x_k$, where
$\{j,k\}=\{1,2,3\}\setminus\{i\}$. That means that $\bar x_i=0$, which allows to reduce identity~\eqref{eq:hex-e} to the inverse of \eqref{eq:diamond}. Thus, by Lemma~\ref{lem:1}, $n=-1$ in this case as well.
\end{proof}

 \section{Proof of Theorem~\ref{thm}}\label{sec2}
 
 Let $\tau\in\Delta$. By cutting out a small open disk $D\subset S$ centered at $x_0$, we obtain a cellular decomposition $\bar\tau$ of $S'\equiv S\setminus D$, where the vertex set is given by the intersection points of edges of $\tau$ with the boundary of $S'$, with two types of edges: \emph{long edges} given by the remnants of the edges of $\tau$ and \emph{short edges} given by the boundary segments between the vertices, and hexagonal 2-cells given by truncated triangles of $\tau$. We assume that the short edges are canonically oriented through the counterclockwise orientation of the boundary of $D$. In what follows, we will abuse  the notation by identifying the long edges of $\bar\tau$ with the edges of $\tau$ and the hexagonal faces of $\bar\tau$ with the triangular  faces of $\tau$, and also we
 will think of the elements of $\tau_1$ and $\tau_2$ as functions on the set $\R_{>0}^{\tau_1}\times \{-1,1\}^{\tau_2}$ in the sense that
\begin{equation}
e\colon \R_{>0}^{\tau_1}\times \{-1,1\}^{\tau_2}\to \R_{>0}, \quad (f,\varepsilon)\mapsto f(e),\quad \forall e\in\tau_1,
\end{equation}
and 
\begin{equation}
t\colon \R_{>0}^{\tau_1}\times \{-1,1\}^{\tau_2}\to \{-1,1\}, \quad (f,\varepsilon)\mapsto \varepsilon(t),\quad \forall t\in\tau_2.
\end{equation}
Depending on the value of $t$, the triangle will be called \emph{positive} or \emph{negative}.

 Following \cite{MR2122727}, we start by constructing a map
 \begin{equation}
\xi_\tau\colon \R_{>0}^{\tau_1}\times \{-1,1\}^{\tau_2}\to \operatorname{Hom}(\pi_1(S',\bar\tau_0),PSL(2,\R))
\end{equation}
where $\pi_1(S',\bar\tau_0)$ is the fundamental groupoid of $S'$ with the vertex set $\bar\tau_0$. The construction is as follows.

To any long edge $e$, we associate the element $\pm w(e)$ where
 \begin{equation}
 w(e)\equiv
\begin{pmatrix}
 0&-e^{-1}\\e&0
\end{pmatrix}.
\end{equation}
As the element $\pm w(e)$ is of order two, our assignment is valid  for both orientations of $e$.
For any short edge $e'$ (with the clockwise orientation with respect to the center of the hexagon to which it belongs) we associate the element $\pm u\!\left(t\frac{a}{bc}\right)$, with $t$ being the unique hexagonal face having $e'$ as its side, $a$ is the long side of $t$ opposite to $e'$, while $b$ and $c$ are two other long sides of $t$. These assignments are illustrated in this picture:
\begin{equation}\label{pic:assign}
\begin{tikzpicture}[scale=1.5,baseline=-3,decoration={
markings,
mark=at position .5 with {\arrow{stealth};}}]
\filldraw[color=gray!10] (20:1cm)--(100:1 cm)--(140:1 cm)--(-140:1cm)--(-100:1 cm)--(-20:1 cm)--cycle;
\draw[auto,color=green,postaction={decorate}] (20:1cm) to node{$\pm u\!\left(t\frac{a}{bc}\right)$} (-20:1cm);
\draw[color=green,postaction={decorate}] (140:1 cm)--(100:1 cm); 
\draw[color=green,postaction={decorate}] (-100:1 cm)--(-140:1 cm); 
\filldraw[auto] (20:1cm) circle (.5pt) to node {$b$} (100:1 cm) circle (.5pt) (140:1 cm) circle (.5pt) to node {$a$} node[swap]{$\pm w(a)$} (-140:1 cm) circle (.5pt)(-100:1cm) circle (.5pt) to node {$c$} (-20:1 cm) circle (.5pt);
\node at (0,0){$t$};
 \end{tikzpicture}
 \end{equation}
 where long edges are drawn in black and short edges in green.
 It is straightforward to check that this assignment uniquely extends to a representation of $\pi_1(S',\bar\tau_0)$. For $(f,\varepsilon)\in\xi_\tau^{-1}(R_{2-2g})$, by taking the equivalence class of the pair $(\xi_\tau(f,\varepsilon), h_0)$, where $h_0$ is the horocycle based at $\infty$ and passing through $i\in\bH^2$, we obtain a map 
 \begin{equation}
 \tilde\xi_{\tau}\colon\xi_\tau^{-1}(R_{2-2g}) \to \ttei
 \end{equation}
which will be shown to be the inverse of $J_{\tau_1}$.
\subsection{Calculation of the Euler number}
In the case $g=w(a)$, the factorization~\eqref{eq:f1}, \eqref{eq:f2} takes the form
\begin{equation}
w(a)=u(-1/a)v(a)u(-1/a),\quad \forall a\in\R_{>0}.
\end{equation}
We implement this factorization combinatorially by transforming $\bar\tau$ to a new cellular complex $\tilde\tau$ as follows. 

We insert two new vertices in each short edge of $\bar\tau$ and connect them inside each hexagonal face by three oriented edges parallel to long edges, the orientations being counterclockwise with respect to the center of the hexagon.  We associate  to these edges the elements of $\pm v(a)$ where the argument $a$ is the  corresponding long edge. The following picture summarizes this subdivision:
\begin{equation}
\bar\tau\ni\quad \begin{tikzpicture}[scale=1.5,baseline=-3,decoration={
markings,
mark=at position .5 with {\arrow{stealth};}}]
\filldraw[color=gray!10] (20:1cm)--(100:1 cm)--(140:1 cm)
--(-140:1cm)--(-100:1 cm)--(-20:1 cm)--cycle;
\draw[color=green,postaction={decorate}] (140:1 cm)--(100:1 cm);
\draw[color=green,postaction={decorate}] (-100:1 cm)--(-140:1 cm);
\draw[color=green,postaction={decorate}] (20:1 cm)--(-20:1 cm);
\filldraw (20:1cm) circle (.5pt)--(100:1 cm) circle (.5pt) (140:1 cm) circle (.5pt)
(-140:1 cm) circle (.5pt)(-100:1cm) circle (.5pt)--(-20:1 cm) circle (.5pt);
\draw[auto] (140:1cm) to node[swap] {$a$} (-140:1cm);
 \end{tikzpicture}\quad
\rightsquigarrow\quad
\begin{tikzpicture}[scale=1.5,baseline=-3,decoration={
markings,
mark=at position .5 with {\arrow{stealth};}}]
\filldraw[color=gray!10] (20:1cm)--(100:1 cm)--(140:1 cm)--
(-140:1cm)--(-100:1 cm)--(-20:1 cm)--cycle;
\draw[color=green,postaction={decorate}] (140:1 cm)--(100:1 cm);
\draw[color=green,postaction={decorate}] (-100:1 cm)--(-140:1 cm);
\draw[color=green,postaction={decorate}] (20:1 cm)--(-20:1 cm);
\filldraw (20:1cm) circle (.5pt)--(100:1 cm) circle (.5pt) (140:1 cm) circle (.5pt)
(-140:1 cm) circle (.5pt)(-100:1cm) circle (.5pt)--(-20:1 cm) circle (.5pt);
\draw[auto] (140:1cm) to node[swap] {$a$} (-140:1cm);
\filldraw[color=blue] ($(-20:1cm)! .2!(20:1cm)$) circle (.5pt) ($(-100:1cm)! .2!(-140:1cm)$) circle (.5pt)
($(100:1cm)! .8!(140:1cm)$) circle (.5pt) ($(-100:1cm)! .8!(-140:1cm)$) circle (.5pt)
 ($(-20:1cm)! .8!(20:1cm)$) circle (.5pt)($(100:1cm)! .2!(140:1cm)$) circle (.5pt);
\draw[color=blue,postaction={decorate}] ($(-100:1cm)! .2!(-140:1cm)$)--($(-20:1cm)! .2!(20:1cm)$)  ;
\draw[auto,color=blue,postaction={decorate}] ($(100:1cm)! .8!(140:1cm)$) to node {$\pm v(a)$} ($(-100:1cm)! .8!(-140:1cm)$);
\draw[color=blue,postaction={decorate}] ($(-20:1cm)! .8!(20:1cm)$)--($(100:1cm)! .2!(140:1cm)$);
 \end{tikzpicture}
 \end{equation}
 where the added vertices and edges are drawn in blue. In this subdivided complex,  the long edges of $\bar\tau$ will be called \emph{primary long edges} while the newly added edges will be called \emph{secondary long edges}. There are now two types of 2-cells: rectangular and hexagonal faces. Rectangular faces come naturally in pairs where each pair is associated with a unique primary long edge. Within each such pair, let us glue two rectangular faces along their common primary long sides and then erase the primary long edge as is described in this picture:
 \begin{equation}
\begin{tikzpicture}[scale=1.5,baseline=-3,decoration={
markings,
mark=at position .5 with {\arrow{stealth};}}]
\filldraw[color=gray!10] (0:1cm)--(90:1cm)--(180:1cm)--(-90:1cm)--cycle;
\draw[color=green] (0:1cm)--(-90:1cm) (180:1cm)--(90:1cm);
\draw[color=blue,postaction={decorate}] (-90:1cm)--(180:1cm);
\draw[color=blue,postaction={decorate}] (90:1cm)--(0:1cm);
\filldraw[auto] ($(180:1cm)! .5!(90:1cm)$)  circle (.5pt) to node {$a$} ($(-90:1cm)! .5!(0:1cm)$)  circle (.5pt);
\filldraw[color=blue] (0:1cm) circle (.5pt)  (90:1cm) circle (.5pt)  (180:1cm) circle (.5pt) (-90:1cm) circle (.5pt);
 \end{tikzpicture}\quad
\rightsquigarrow\quad
\begin{tikzpicture}[scale=1.5,baseline=-3,decoration={
markings,
mark=at position .5 with {\arrow{stealth};}}]
\filldraw[color=gray!10] (0:1cm)--(90:1cm)--(180:1cm)--(-90:1cm)--cycle;
\draw[color=green] (0:1cm)--(-90:1cm) (180:1cm)--(90:1cm);
\draw[color=blue,postaction={decorate}] (-90:1cm)--(180:1cm);
\draw[color=blue,postaction={decorate}] (90:1cm)--(0:1cm);
\node at (0,0){$a$};
\filldraw[color=blue] (0:1cm) circle (.5pt)  (90:1cm) circle (.5pt)  (180:1cm) circle (.5pt) (-90:1cm) circle (.5pt);
 \end{tikzpicture}
 \quad\in\tilde\tau
 \end{equation}
 The result is our transformed complex $\tilde\tau$ which has rectangular faces which are in bijection with the edges of $\tau$ and  and hexagonal faces which are in bijection with the triangles of $\tau$. By taking into account the group elements associated with the edges, each rectangular face of $\tilde\tau$ corresponds to a case of the 
 relation~\eqref{eq:diamond} where variable $x$ is given by the positive number associated with the corresponding primary edge, while each hexagonal face of $\tilde\tau$ corresponds to a case  of the relation~\eqref{eq:hex-e} where three components of the vector $x$ are given by three positive numbers associated with three (primary) long edges around the hexagon and $\epsilon$ being given by the value of the corresponding function $t\in\tau_2$. Under the lifts~\eqref{eq:lift-uv}, each face contributes an integer to the Euler number of the representation. The contributions are controlled by Lemma~\ref{lem:1} for the rectangular faces and by Lemma~\ref{lem:3} for the hexagonal faces. Let $N_-$ and $N_+$ be the numbers of negative and positive triangles respectively. By Lemma~\ref{lem:1}, the total contribution from all rectangular faces is the number of edges of $\tau$, i.e. $6g-3$, while, by Lemma~\ref{lem:3}, the total contribution of the hexagonal faces is $-N_--2N_+$. Thus, the Euler number of $\xi_\tau(f,\varepsilon)$ is calculated as follows:
 \begin{equation}\label{eq:euler}
e(\xi_\tau(f,\varepsilon))=6g-3-N_- -2N_+=1+N_--2g,
\end{equation}
where we have taken into account the equality $N_-+N_+=4g-2$. Thus, the subset $\xi_\tau^{-1}(R_{2-2g})$ is completely characterized by the condition $N_-=1$, i.e. that there is only one negative triangle, and the vanishing condition for the boundary  holonomy which takes the form
\begin{equation}
\sum_{t\in\tau_2}t\frac{p_t}{q_t}=0,\quad p_t\equiv\sum_{e\in t_1}e^2,\quad q_t\equiv\prod_{e\in t_1}e.
\end{equation}
These two conditions, in their turn, are equivalent to a single vanishing condition $\psi_\tau=0$ in $\R_{>0}^{\tau_1}$ with
\begin{equation}
\psi_\tau\equiv\prod_{t\in\tau_2}\psi_{\tau,t},\quad \psi_{\tau,t}\equiv-\frac{p_t}{q_t}+\sum_{s\in\tau_2\setminus\{t\}}\frac{p_s}{q_s}.
\end{equation}
Thus, by taking into account the intersection properties \eqref{eq:inters}, we conclude that we have a natural identification
\begin{equation}
\xi_\tau^{-1}(R_{2-2g})\simeq\psi_\tau^{-1}(0).
\end{equation}

\subsection{Verification of the equality $\tilde\xi_{\tau}=J_{\tau_1}^{-1}$}
The equality 
\begin{equation}
J_{\tau_1}(\tilde \xi_{\tau}(f))=f,\quad\forall f\in \psi_\tau^{-1}(0),
\end{equation}
is checked straightforwardly, while the reverse equality 
\begin{equation}
\tilde \xi_{\tau}(J_{\tau_1}(m))=m,\quad\forall m\in\ttei_{\tau_1},
\end{equation}
 is checked by choosing a representative $(\rho,h_0)$ of $m$ where $h_0$ is the horocycle based at $\infty$ and passing through $i\in\bH^2$. For such a representative, the representation $\rho$ is defined uniquely up to conjugation by elements of the group of upper triangular unipotent  matrices (the  stabilizer subgroup for $h_0$).  

For each triangle $t$ of $\tau$, we take three $PSL(2,\R)$-elements representing its three oriented sides, the orientations being chosen cyclically in the counter-clockwise direction with respect to the center of $t$, and we choose the $SL(2,\R)$-representatives of those elements which have positive left lower matrix elements, i.e. $c>0$. We associate to $t$ the sign of the cyclic product of those representatives along the boundary of $t$, thus obtaining a map 
\begin{equation}\label{eq:epsm}
\varepsilon_{m}\colon\tau_2\to\{-1,1\}.
\end{equation}
 We also apply the factorization formula~\eqref{eq:f1} to each of those representatives thus realizing $\rho$ as a representation of the form $\xi_\tau(f,\varepsilon_m)$. The Euler number calculation~\eqref{eq:euler} implies that $\varepsilon_{m}=\epsilon_t$ for some $t\in\tau_2$.
 
 \subsection{Signed Ptolemy transformation}
The transition functions $J_{\tau_1}\circ J_{\tau^e_1}^{-1}$ on the overlaps $\ttei_{\tau_1}\cap\ttei_{\tau_1^e}$ are given by the same signed Ptolemy transformation as in \cite{MR2122727} (Proposition~4). This is a consequence of the definition of the map $\xi_\tau$ based on the same rules~\eqref{pic:assign} of assigning group elements on the edges of truncated triangulations. Below, following \cite{MR2122727}, we derive the signed Ptolemy transformation.  

If two pairs with one and the same horocyclic components represent the same point in $\ttei$, then the representation components are conjugated by an element of the stabilizer subgroup of the common horocycle which is a parabolic subgroup isomorphic to $\R$. That means that the parallel transport operators on the short edges  of the truncated triangulations, being in the same subgroup, must be the same independently of the triangulation. By using the rules~\eqref{pic:assign}, we can write, for example, the equality for the parallel transport operators along the short edge between the long edges $a$ and $b$ on two sides of this picture
\begin{equation}
 \tau\ni\quad\begin{tikzpicture}[scale=1.5,baseline=-3,decoration={
markings,
mark=at position .5 with {\arrow{stealth};}}]
\filldraw[color=gray!10] (0:1cm)--(45:1 cm)--(90:1cm)--(135:1cm)--(180:1cm)--(-135:1 cm)--(-90:1cm)--(-45:1cm)--cycle;
\draw[color=green,postaction={decorate}] (180:1 cm)--(135:1 cm);
\draw[color=green,postaction={decorate}] (0:1 cm)--(-45:1 cm);
\draw[color=green,postaction={decorate}] (90:1 cm)--($(90:1cm)! .5!(45:1cm)$);
\draw[color=green,postaction={decorate}]($(90:1cm)! .5!(45:1cm)$)-- (45:1 cm);
\draw[color=green,postaction={decorate}] (-90:1 cm)--($(-90:1cm)! .5!(-135:1cm)$);
\draw[color=green,postaction={decorate}]($(-90:1cm)! .5!(-135:1cm)$)-- (-135:1 cm);
\node at (157.5:.5cm){$\alpha$};
\node at (-22.5:.5cm){$\beta$};
\node (c) at (0,0){$e$};
\draw [auto](0:1 cm)to node[swap] {$b$}(45:1cm) 
(90:1cm)to node[swap] {$a$}(135:1cm)(180:1cm)to node [swap]{$d$}(-135:1cm)(-90:1cm)to node[swap] {$c$}(-45:1cm) ;
\filldraw (0:1cm) circle (.5pt)(45:1 cm) circle (.5pt)(90:1 cm) circle (.5pt)(135:1cm) circle (.5pt)(180:1cm) circle (.5pt)(-135:1 cm) circle (.5pt)(-90:1 cm) circle (.5pt)(-45:1cm) circle (.5pt);
\filldraw($(-90:1cm)! .5!(-135:1cm)$) circle (.5pt) --(c)-- ($(90:1cm)! .5!(45:1cm)$) circle (.5pt);
 \end{tikzpicture}
 \quad\leftrightsquigarrow\quad
  \begin{tikzpicture}[scale=1.5,baseline=-3,decoration={
markings,
mark=at position .5 with {\arrow{stealth};}}]
\filldraw[color=gray!10] (0:1cm)--(45:1 cm)--(90:1cm)--(135:1cm)--(180:1cm)--(-135:1 cm)--(-90:1cm)--(-45:1cm)--cycle;
\draw[color=green,postaction={decorate}] (90:1 cm)--(45:1 cm);
\draw[color=green,postaction={decorate}] (-90:1 cm)--(-135:1 cm);
\draw[color=green,postaction={decorate}] (0:1 cm)--($(0:1cm)! .5!(-45:1cm)$);
\draw[color=green,postaction={decorate}]($(0:1cm)! .5!(-45:1cm)$)-- (-45:1 cm);
\draw[color=green,postaction={decorate}] (180:1 cm)--($(180:1cm)! .5!(135:1cm)$);
\draw[color=green,postaction={decorate}]($(180:1cm)! .5!(135:1cm)$)-- (135:1 cm);
\node at (67.5:.5cm){$\gamma$};
\node at (-112.5:.5cm){$\delta$};
\node (c) at (0,0){$f$};
\draw [auto](0:1 cm)to node[swap] {$b$}(45:1cm) 
(90:1cm)to node[swap] {$a$}(135:1cm)(180:1cm)to node [swap]{$d$}(-135:1cm)(-90:1cm)to node[swap] {$c$}(-45:1cm) ;
\filldraw (0:1cm) circle (.5pt)(45:1 cm) circle (.5pt)(90:1 cm) circle (.5pt)(135:1cm) circle (.5pt)(180:1cm) circle (.5pt)(-135:1 cm) circle (.5pt)(-90:1 cm) circle (.5pt)(-45:1cm) circle (.5pt);
\filldraw($(180:1cm)! .5!(135:1cm)$) circle (.5pt)--(c)--($(0:1cm)! .5!(-45:1cm)$) circle (.5pt);
 \end{tikzpicture}
 \quad\in\tau^e
\end{equation}
The result reads
\begin{multline}
\pm u\left(\alpha\frac{d}{ae}\right)u\left(\beta\frac{c}{eb}\right)=\pm u\left(\gamma\frac{f}{ab}\right)\Leftrightarrow
\alpha\frac{d}{ae}+\beta\frac{c}{eb}=\gamma\frac{f}{ab}\\
\Leftrightarrow \alpha bd+\beta ac=\gamma ef
\end{multline}
Doing the same calculation for the short edge between the long edges $c$ and $d$, we also have
\begin{multline}
\pm u\left(\beta\frac{b}{ce}\right)u\left(\alpha\frac{a}{ed}\right)=\pm u\left(\delta\frac{f}{cd}\right)\Leftrightarrow
\beta\frac{b}{ce}+\alpha\frac{a}{ed}=\delta\frac{f}{cd}\\
\Leftrightarrow \beta bd+\alpha ac=\delta ef
\end{multline}
The two equalities can equivalently be rewritten as
\begin{equation}\label{eq:ptol}
 \alpha bd+\beta ac=\gamma ef,\quad \alpha\beta=\gamma\delta.
\end{equation}
This is exactly the signed Ptolemy relation of  \cite{MR2122727}.
\section{Proof of Theorem~\ref{thm:2}} \label{sec3}
\subsection{Part (i)}
Given $m\in\lambda_\alpha^{-1}(0)$. Let us choose $\tau\in\Delta^\alpha$. The signed Ptolemy transformation formula~\eqref{eq:ptol}  implies that 
\begin{equation}\label{eq:prod-1}
\prod_{t\in(\tau_\alpha)_2}\varepsilon_m(t)=-1,
\end{equation}
see \eqref{eq:epsm} for the definition of the function $\varepsilon_m$.
That means that one of the triangles of $\tau_\alpha$ is necessarily negative which proves part (i).
\subsection{Part (ii)}
Let $t\in(\tau_\alpha)_2$ be the negative triangle, and let  $(\tau_\alpha)_1= \{\alpha_\tau, a,b,c,d\}$ be arranged as in  this picture 
\begin{equation}
 \begin{tikzpicture}[scale=1.5,baseline=-3]
\filldraw[color=gray!10] (0:1cm)--(90:1 cm)--(180:1cm)--(-90:1cm)--cycle;
\node at (0,.4){$t$};
\draw [auto](180:1 cm)to node[swap] {$\alpha_\tau$}(0:1cm) 
(0:1cm)to node[swap] {$b$}(90:1cm)to node [swap]{$a$}(180:1cm)to node[swap] {$d$}(-90:1cm)to node[swap] {$c$}(0:1cm) ;
\filldraw (0:1cm) circle (.5pt)(90:1 cm) circle (.5pt)(180:1 cm) circle (.5pt)(-90:1cm) circle (.5pt);
 \end{tikzpicture}
\end{equation}
Apart from equality~\eqref{eq:prod-1}, the signed Ptolemy relation also implies that
\begin{equation}
\frac a d=\frac bc\equiv x
\end{equation}
which we can solve for $a$ and $b$:
\begin{equation}\label{eq:abmu}
a=x d,\quad b=x c.
\end{equation}
Here we assume that the sides of the quadrilateral $\tau_\alpha$ are geometrically pairwise distinct\footnote{It is possible that one pair of opposite sides of $\tau_\alpha$ are geometrically identical, for example, $b=d$. In that case, instead of \eqref{eq:abmu} one will have $a=x^2c$ and $b=d=cx$.}.
By substituting \eqref{eq:abmu} into $\psi_{\tau,t}$ and writing out explicitly the contributions from the quadrilateral $\tau_\alpha$, we calculate
\begin{multline}
\psi_{\tau,t}=-\frac{a}{b\alpha_\tau}-\frac{b}{a\alpha_\tau}-\frac{\alpha_\tau}{ab}+\frac{c}{d\alpha_\tau}+\frac{d}{c\alpha_\tau}+\frac{\alpha_\tau}{cd}+\sum_{s\in\tau_2\setminus(\tau_\alpha)_2}\frac{p_s}{q_s}\\
=-\frac{d}{c\alpha_\tau}-\frac{c}{d\alpha_\tau}-\frac{\alpha_\tau}{cdx^2}+\frac{c}{d\alpha_\tau}+\frac{d}{c\alpha_\tau}+\frac{\alpha_\tau}{cd}+\sum_{s\in\tau_2\setminus(\tau_\alpha)_2}\frac{p_s}{q_s}\\
=-\frac{\alpha_\tau}{cd}\left(x^{-2}-1\right)+\sum_{s\in\tau_2\setminus(\tau_\alpha)_2}\frac{p_s}{q_s}.
\end{multline}
The equality $\psi_{\tau,t}=0$ implies that $x<1$, and, as the other terms do not contain variable $\alpha_\tau$, we can  solve it explicitly for $\alpha_\tau$:
\begin{equation}\label{eq:alt}
\alpha_\tau=\frac{cd}{x^{-2}-1}\sum_{s\in\tau_2\setminus(\tau_\alpha)_2}\frac{p_s}{q_s}.
\end{equation}
Finally, by using the rules~\eqref{pic:assign}, we can calculate the parallel transport operator associated with $\alpha$ which happens to be represented by an upper triangular matrix with the diagonal elements being given by $\pm x^{\pm1}$ so that we get the relation
\begin{equation}
 x=e^{-\ell_\alpha(\phi(m))/2}.
\end{equation}

 \subsection{Part (iii)}
 If $\phi(m)=\phi(m')$ then, by choosing representatives $(\rho,h_0)$ and $(\rho',h_0)$ of $m$ and $m'$ respectively, we see that representations $\rho$ and $\rho'$ are conjugated by un upper triangular matrix (it must have $\infty$ as a fixed point) so that the equivalence~\eqref{eq:equiv} becomes evident.

\end{document}